\newcommand{\PP}{{\mathbb{P}}}
\DeclareMathOperator{\Add}{Add}
\def\k{\kappa}
\def\l{\lambda}
\def\a{\alpha}
\def\b{\beta}
\newtheorem{theorem}{Theorem}[section]
\newtheorem{lemma}[theorem]{Lemma}
\newtheorem{corollary}[theorem]{Corollary}
\newtheorem{remark}[theorem]{Remark}
\newtheorem{question}[theorem]{Question}
\numberwithin{equation}{section}
\def\l{\lambda}
\def\rmark{\mbox{$\rm\bf\rule{0.06em}{1.45ex}\kern-0.05em R$}}
\def\pmark{\mbox{$\rm\bf\rule{0.06em}{1.45ex}\kern-0.05em P$}}
\def\nmark{\mbox{$\rm\bf\rule{0.06em}{1.45ex}\kern-0.05em N$}}
\def\vdash{\mbox{$\rm\| \kern-0.13em -$}}
\def\l{\lambda}
\def\rmark{\mbox{$\rm\bf\rule{0.06em}{1.45ex}\kern-0.05em R$}}
\def\pmark{\mbox{$\rm\bf\rule{0.06em}{1.45ex}\kern-0.05em P$}}
\def\nmark{\mbox{$\rm\bf\rule{0.06em}{1.45ex}\kern-0.05em N$}}
\def\vdash{\mbox{$\rm\| \kern-0.13em -$}}
\begin{document}

\title[On the notions of dimension and transcendence degree for models of $ZFC$]{On the notions of dimension and transcendence degree for models of $ZFC$}

\author[Mohammad Golshani ]{Mohammad
  Golshani }

\thanks{The author's research has been supported by a grant from IPM (No. 91030417).}
\thanks{He also wishes to thank M. Asgharzadeh for his useful comments about notions of dimension and transcendence degree in algebra.} \maketitle



\begin{abstract}
We define notions of generic dimension and generic transcendence degree between models of $ZFC$ and give some examples.
\end{abstract}

\section{introduction}
Given models  $V  \subseteq W$ of $ZFC$, we define the notions of generic dimension and generic transcendence degree of $W$ over $V$,  and prove some results about them.  We usually assume that $W$ is a generic extension of $V$ by a set or a class forcing notion, but some of our results work for general cases.

\section{generic dimension and generic transcendence degree}
Suppose that $V  \subseteq W$ are models of $ZFC$ with the same ordinals.
Let's start with some definitions.
\begin{itemize}
\item Let $X=\langle x_i: i\in I \rangle \in W,$ where $I,$ the set of indices, is in  $V.$ The elements of $X$ are called mutually generic over $V$, if for any partition  $I=I_0 \cup I_1$ of $I$ in $V$, $\langle x_i: i\in I_0 \rangle$ is generic over $V[\langle x_i: i\in I_1 \rangle ],$ for some forcing notion $\mathbb{P}\in V.$
\item The $\k-$generic transcendence degree of $W$ over $V$, $\k-g.tr.deg_V(W),$ is defined to be $\sup\{|A|: A\in W$ and for all $X\in [A]^{<\k},$ the elements of $X$ are mutually generic over $V \}.$
\item Upward generic dimension of $W$ over $V$, $[W:V]^U$, is defined to be $\sup\{\a:$ there exists a $\subset-$increasing chain $\langle V_i: i<\a \rangle$ of set generic extensions of $V$, where $V_0=V,$ and each $V_i\subseteq W \}.$
\item Downward generic dimension of $W$ over $V$, $[W:V]_D$, is defined to be $\sup\{\a:$ there exists a $\subset-$decreasing chain $\langle W_i: i<\a \rangle$ of grounds \footnote{Recall that $V$ is a ground of $W$, if $W$ is a set generic extension of $V$ by some forcing notion in $V$.} of $W$, where $W_0=W,$ and each $W_i \supseteq V \}.$
\end{itemize}
\begin{lemma}
\begin{enumerate}
\item $\k < \l \Rightarrow \k-g.tr.deg_V(W) \geq \l-g.tr.deg_V(W).$
\item $[W:V]^U \geq \k-g.tr.deg_V(W),$ for $\k$ such that $\k-g.tr.deg_V(W)=\k^+-g.tr.deg_V(W)$ (such a $\k$ exists by $(1)$).
\end{enumerate}
\end{lemma}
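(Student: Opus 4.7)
\emph{Plan for (1).} This is essentially definitional. When $\kappa<\lambda$ one has $[A]^{<\kappa}\subseteq[A]^{<\lambda}$, so any $A\in W$ for which every $X\in[A]^{<\lambda}$ consists of mutually $V$-generic elements has the analogous property for every $X\in[A]^{<\kappa}$. Hence every witness for $\lambda\text{-g.tr.deg}_V(W)\ge|A|$ is also a witness for $\kappa\text{-g.tr.deg}_V(W)\ge|A|$, and the inequality follows by taking suprema.

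\emph{Plan for (2).} Write $\alpha=\kappa\text{-g.tr.deg}_V(W)=\kappa^+\text{-g.tr.deg}_V(W)$ and fix $\beta<\alpha$. By the equality I pick $A\in W$ with $|A|\ge\beta$ such that every $X\in[A]^{<\kappa^+}$ consists of mutually $V$-generic elements. Fixing an enumeration $\langle a_i:i<\beta\rangle$ of a $\beta$-sized subset of $A$, I set
\[
V_i=V\bigl[\langle a_j:j<i\rangle\bigr]\qquad(i\le\beta)
\]
and aim to show $\langle V_i:i\le\beta\rangle$ is a strict $\subset$-chain of set generic extensions of $V$ inside $W$; letting $\beta\to\alpha$ then gives $[W:V]^U\ge\alpha$. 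Three things need checking. Containment $V_i\subseteq W$ is clear. For strict increase, applying the mutual-genericity clause to $\{a_j:j\le i\}\in[A]^{<\kappa^+}$ with the partition $\{j:j<i\}\cup\{i\}$ makes $a_i$ generic over $V_i$, so $a_i\notin V_i$ and $V_i\subsetneq V_{i+1}$. For $i$ with $|i|\le\kappa$, the set $\{a_j:j<i\}$ also lies in $[A]^{<\kappa^+}$, so applying the mutual-genericity clause with the trivial partition $I_0=i$, $I_1=\emptyset$ produces some $\mathbb P\in V$ such that $\langle a_j:j<i\rangle$ is $\mathbb P$-generic over $V$; hence $V_i$ is a set generic extension of $V$.

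\emph{Main obstacle.} The substantive case is $\kappa<|i|\le\beta$, which arises only when $\alpha>\kappa^+$. Here $\{a_j:j<i\}$ lies outside $[A]^{<\kappa^+}$ and the mutual-genericity hypothesis no longer applies directly. My plan is to write $V_i=\bigcup_{i'<i}V_{i'}$ and to take a direct limit of the forcing notions produced at the earlier stages; the equality $\kappa\text{-g.tr.deg}_V(W)=\kappa^+\text{-g.tr.deg}_V(W)$ is precisely what should allow a coherent choice of these forcings, so that the direct limit remains set-sized and in $V$. Verifying this coherence, and confirming that the resulting forcing produces $V_i$ over $V$, is the heart of the matter; without the equality hypothesis the chain would only be guaranteed to reach length $\kappa^+$.
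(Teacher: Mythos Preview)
The paper states this lemma without proof, so there is nothing to compare against; what follows is an assessment of your proposal on its own.

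Your argument for (1) is correct and is the obvious one.

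For (2), your construction is fine whenever $\alpha:=\kappa\text{-g.tr.deg}_V(W)\le\kappa^{+}$: then every $\beta<\alpha$ satisfies $|\beta|\le\kappa$, each initial segment $\{a_j:j<i\}$ lies in $[A]^{<\kappa^{+}}$, and your ``main obstacle'' never occurs. The problem is the case $\alpha>\kappa^{+}$, and here your direct-limit repair does not go through as written. The definition of ``mutually generic'' only promises that for each partition there is \emph{some} forcing $\mathbb{P}\in V$ doing the job; nothing says these forcings cohere as the initial segment grows. So the system $\langle\mathbb{P}_{i'}:i'<i\rangle$ you would need has no reason to be directed, to have a limit in $V$, or to have a set-sized limit at all. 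The hypothesis $\kappa\text{-g.tr.deg}=\kappa^{+}\text{-g.tr.deg}$ does not supply this coherence: it is a statement about which sets $A$ exist, not about any functoriality among the witnessing posets.

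If the lemma is read existentially (``there is a $\kappa$ with the stated equality for which $[W:V]^U\ge\kappa\text{-g.tr.deg}_V(W)$''), which the parenthetical suggests, then the obstacle can be sidestepped: by (1) the map $\kappa\mapsto\kappa\text{-g.tr.deg}_V(W)$ is non-increasing, hence eventually constant with some value $\alpha_\infty$; choosing $\kappa$ beyond the stabilisation point and with $\kappa\ge\alpha_\infty$ gives $\kappa\text{-g.tr.deg}=\kappa^{+}\text{-g.tr.deg}=\alpha_\infty\le\kappa<\kappa^{+}$, and your chain argument then runs unobstructed. If, however, the intended reading is universal (``for every $\kappa$ with the equality''), then your proposal is genuinely incomplete, and it is not clear the direct-limit idea can be salvaged without further hypotheses on the witnessing forcings.

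One minor point: for strict increase $V_i\subsetneq V_{i+1}$ you need the forcing making $a_i$ generic over $V_i$ to be non-trivial; the paper's definition of ``mutually generic'' does not explicitly exclude trivial forcings, so you should either note this as an implicit assumption or argue it separately.
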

\begin{lemma} \footnote{We thank Monroe Eskew for  bringing this lemma to our attention.}
Let $V[G]$ be a generic extension of $V$ by some forcing notion $\mathbb{P}\in V,$ and let $W$be a model of $ZFC$ such that $V \subseteq W \subseteq V[G].$ Then there are $\mathbb{Q}\in V$ and $\pi: \mathbb{P}  \rightarrow \mathbb{Q}$ such that:
\begin{enumerate}
\item $|\mathbb{Q}| \leq |\mathbb{P}|,$
\item $\pi[G]$ generate a filter $H$ which is $\mathbb{Q}-$generic over $V$,
\item $W=V[H].$
\end{enumerate}
\end{lemma}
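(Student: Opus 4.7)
My plan is to invoke Solovay's classical intermediate model theorem and then cut the resulting complete subalgebra down to a forcing notion of cardinality at most $|\mathbb{P}|$ by means of a projection.

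First I would pass to the regular open algebra $B = \mathrm{RO}(\mathbb{P}) \in V$ and the $V$-generic ultrafilter $U \subseteq B$ generated by $G$, so that $V[G] = V[U]$. Solovay's intermediate model theorem then yields a complete subalgebra $B_0 \subseteq B$ lying in $V$ such that $W = V[U_0]$, where $U_0 := U \cap B_0$. This is the only non-routine input; everything that follows is bookkeeping.

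Working in $V$, I would define the canonical projection $\pi \colon B \to B_0$ by $\pi(b) = \bigwedge \{c \in B_0 : c \geq b\}$, which exists by completeness of $B_0$ in $V$ and satisfies $b \leq \pi(b)$. Set $\mathbb{Q} := \pi[\mathbb{P}] \setminus \{0\}$ with the inherited order. Clause $(1)$ is then immediate from $|\mathbb{Q}| \leq |\mathbb{P}|$. I would then argue that $\mathbb{Q}$ is dense in $B_0 \setminus \{0\}$: given such a $b$, density of $\mathbb{P}$ in $B$ yields $p \in \mathbb{P}$ with $0 < p \leq b$, and then $0 < \pi(p) \leq b$ because $b$ itself belongs to the set of elements of $B_0$ above $p$. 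Consequently $V$-genericity on $B_0$ transfers to $V$-genericity on $\mathbb{Q}$.

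For clauses $(2)$ and $(3)$, I would verify that the filter $H$ on $\mathbb{Q}$ generated by $\pi[G]$ coincides with $U_0 \cap \mathbb{Q}$: the inclusion $\pi[G] \subseteq U_0$ follows from $p \leq \pi(p)$ together with $\pi(p) \in B_0$; conversely, any $b \in U_0$ admits some $p \in G$ with $p \leq b$ in $B$, whence $\pi(p) \leq b$ places $\pi(p) \in \pi[G]$ below $b$. Thus $H$ is $V$-generic on $\mathbb{Q}$ and $V[H] = V[U_0] = W$. I expect the only real obstacle to be a careful formulation of the intermediate model theorem; the remaining verifications are routine manipulations with complete Boolean algebras.
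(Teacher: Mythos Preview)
Your proposal is correct and follows essentially the same route as the paper: pass to $\mathrm{RO}(\mathbb{P})$, invoke the intermediate model theorem to obtain a complete subalgebra, take the standard projection, and set $\mathbb{Q}$ to be the image of $\mathbb{P}$ under that projection. Your write-up is in fact more detailed than the paper's, which merely asserts that $\mathbb{Q}$ is dense in the subalgebra and that the required properties follow, whereas you spell out both the density argument and the identification of the filter generated by $\pi[G]$ with $U_0\cap\mathbb{Q}$.
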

\begin{proof}
Let $\mathbb{B}=r.o(\mathbb{P}),$ and let $e: \mathbb{P} \rightarrow \mathbb{B}$ be the induced embedding. Let $\bar{G}$ be the filter generated by $e[G],$ so that $V[G]=V[\bar{G}].$ Then for some complete subalgebra $\mathbb{C}$ of $\mathbb{B},$ we have $W=V[\bar{G}\cap \mathbb{C}].$ Let $\pi: \mathbb{B} \rightarrow \mathbb{C}$ be the standard projection map, given by $\pi(b)=\min\{c\in \mathbb{C}: c \geq b\}.$ Let $\mathbb{Q}=\pi[\mathbb{P}],$ and consider $\pi\upharpoonright \mathbb{P}: \mathbb{P}\rightarrow \mathbb{Q}.$ $\mathbb{Q}$ is a dense subset of $\mathbb{C},$ and so it is easily seen that $\pi\upharpoonright\mathbb{P}$ and $\mathbb{Q}$ are as required.
\end{proof}
\begin{corollary}
Let $V[G]$ be a generic extension of $V$ by some forcing notion $\mathbb{P}\in V.$ Then $[V[G]:V]^U, [V[G]:V]_D \leq (2^{|\mathbb{P}|})^+.$
\end{corollary}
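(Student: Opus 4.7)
The plan is to bound the total number of intermediate models $W$ with $V \subseteq W \subseteq V[G]$, and let both dimension bounds fall out of this count. Each $V_i$ in an upward chain is, by definition, a set-generic extension of $V$ contained in $V[G]$, hence an intermediate model; each $W_i$ in a downward chain is a ground of $V[G]$ sandwiched between $V$ and $V[G]$, hence also intermediate. A strictly $\subseteq$-increasing (respectively $\subseteq$-decreasing) sequence of distinct elements in a set of cardinality $\kappa$ has order type $<\kappa^{+}$, so if the intermediates number at most $2^{|\mathbb{P}|}$, both dimensions are bounded by $(2^{|\mathbb{P}|})^{+}$.

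To count intermediates I apply the preceding Lemma. For each intermediate $W$, pick (by choice) a witness $(\mathbb{Q}_W, \pi_W)\in V$ with $|\mathbb{Q}_W|\leq|\mathbb{P}|$, $\pi_W\colon \mathbb{P}\to \mathbb{Q}_W$, and $W=V[H_W]$, where $H_W$ is the filter generated by $\pi_W[G]$. Since $W$ is recovered from $(\mathbb{Q}_W,\pi_W)$ together with $G$, the map $W\mapsto(\mathbb{Q}_W,\pi_W)$ is injective into the class of such pairs in $V$. Moreover, replacing each $\mathbb{Q}$ by an order-isomorphic copy in $V$ whose underlying set is an ordinal $\leq|\mathbb{P}|$ does not alter the extension $V[\pi[G]]$ (the isomorphism lives in $V$), so it suffices to count these normalized pairs.

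There are at most $2^{|\mathbb{P}|}$ pairs $(S,\leq)$ where $S\subseteq|\mathbb{P}|$ and $\leq$ is a partial order on $S$, and for each such $\mathbb{Q}$ there are at most $|\mathbb{P}|^{|\mathbb{P}|}=2^{|\mathbb{P}|}$ maps $\pi\colon \mathbb{P}\to \mathbb{Q}$. Multiplying gives $2^{|\mathbb{P}|}$ many pairs in total, and hence at most $2^{|\mathbb{P}|}$ intermediate models. Combined with the chain-length observation of the first paragraph, this yields $[V[G]\colon V]^{U},[V[G]\colon V]_{D}\leq(2^{|\mathbb{P}|})^{+}$. No serious obstacle arises: the preceding Lemma does the substantive work of identifying each intermediate as a $V$-generic extension by a small forcing, and the remainder is cardinal arithmetic.
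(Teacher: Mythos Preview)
Your proof is correct and follows exactly the route the paper has in mind: the corollary is stated without proof immediately after Lemma~2.2, and the implicit argument is precisely the one you spell out---use the lemma to attach to every intermediate model a pair $(\mathbb{Q},\pi)\in V$ with $|\mathbb{Q}|\le|\mathbb{P}|$, observe that there are at most $2^{|\mathbb{P}|}$ such pairs, and conclude that no strict chain of intermediate models can have order type $\ge(2^{|\mathbb{P}|})^{+}$.
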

\begin{question}
In the above Corollary, can we replace $2^{|\mathbb{P}|}$ by $|\mathbb{P}|^{<\k},$ where $\k$ is such that $\mathbb{P}$ satisfies the $\k-c.c.?$
\end{question}
\begin{theorem}
Let $\mathbb{P}=\Add(\omega, 1)$ be the Cohen forcing for adding a new Cohen real and let $G$ be $\mathbb{P}-$generic over $V$. Then:
\begin{enumerate}
\item $\omega-g.tr.deg_V(V[G])=2^{\aleph_0}.$
\item For $\k> \omega$ we have  $\k-g.tr.deg_V(V[G])=\aleph_0.$
\item $[V[G]:V]^U=\aleph_1.$
\item $[V[G]:V]_D=\aleph_1.$
\end{enumerate}
\end{theorem}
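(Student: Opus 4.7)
The plan is to handle each of the four parts using almost disjoint families in $V$ for lower bounds and repeated applications of Lemma 2.3 for upper bounds. For part (1), I would fix an almost disjoint family $\{A_\alpha:\alpha<2^{\aleph_0}\}\in V$ of infinite subsets of $\omega$ and set $c_\alpha = G\upharpoonright A_\alpha$. Any finite subfamily is pairwise almost disjoint, so $\langle c_{\alpha_1},\dots,c_{\alpha_n}\rangle$ is, up to a finite modification, a product of $n$ independent Cohen reals and hence mutually generic over $V$. For the upper bound, if $A$ witnesses the transcendence degree then Lemma 2.3 gives $V[x]=V[r_x]$ for some real $r_x\in V[G]$ for each $x\in A$, and mutual genericity forces $V[x]\ne V[y]$ for distinct $x,y\in A$; thus $x\mapsto r_x$ is injective and $|A|\le(2^{\aleph_0})^{V[G]}=2^{\aleph_0}$.

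For part (2), the lower bound $\aleph_0$ is immediate by decoding $G$ via a partition of $\omega$ in $V$ into countably many infinite pieces. For the upper bound, I would suppose for contradiction that $|A|\ge\aleph_1$ with every countable subset mutually generic. Since $\Add(\omega,A)$ is ccc, its $V$-maximal antichains are countable and hence supported on countable subsets of $A$, so countable mutual genericity propagates to the whole family $A$ being $\Add(\omega,A)$-generic over $V$. Then $V[A]\subseteq V[G]$, so Lemma 2.3 forces $V[A]=V[s]$ for some $s$ generic for a countable forcing, i.e., a Cohen extension of $V$. But $\Add(\omega,\omega_1)$ is non-separable, whereas the Cohen algebra is separable, so $V[\Add(\omega,\omega_1)\text{-gen}]$ cannot equal any Cohen extension of $V$, a contradiction.

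For part (3), in $V$ I would choose by transfinite recursion an almost disjoint family $\{A_\alpha:\alpha<\omega_1\}$ in which each $A_\alpha$ has infinitely many points outside $\bigcup_{\beta<\alpha}A_\beta$, and set $V_\alpha = V[\langle G\upharpoonright A_\beta:\beta<\alpha\rangle]$: Lemma 2.3 certifies each $V_\alpha$ as a Cohen sub-extension, and the choice of family ensures $c_\alpha\notin V_\alpha$, yielding a strictly increasing chain of length $\omega_1$. For the upper bound, every sub-extension of $V[G]$ is $V[r]$ for some real $r\in V[G]$; assuming $(2^{\aleph_0})^V=\aleph_1$ (e.g., $V\models\mathrm{CH}$), there are at most $\aleph_1$ distinct such sub-extensions, ruling out chains of cardinality $\aleph_2$. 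Part (4) reduces to (3): by Lemma 2.3, the grounds of $V[G]$ containing $V$ are precisely the intermediate models $V[r]\subseteq V[G]$ (each being a ground of $V[G]$ via the quotient forcing), so decreasing chains of grounds correspond to reversed increasing chains of sub-extensions, giving $[V[G]:V]_D=[V[G]:V]^U=\aleph_1$.

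The main obstacle is the ccc-genericity propagation in part (2): one must verify carefully that countable mutual genericity lets the filter generated by $A$ meet every $V$-maximal antichain of $\Add(\omega,A)$. This relies on cccness to restrict arbitrary antichains to countable sub-supports of $A$, and is the step that forces the contradiction with the non-separability of $\Add(\omega,\omega_1)$.
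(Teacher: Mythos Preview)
Your approach to parts (1) and (2) is essentially the paper's: the branch-of-tree reals the paper uses are a particular almost disjoint family, and your ccc propagation argument is exactly the content of the paper's Lemma 2.6(2), with your non-separability remark being one way to see Lemma 2.6(1). One small gap in (2): you pass directly from ``$A$ has countable subsets mutually generic'' to ``$A$ is $\Add(\omega,A)$-generic,'' but the elements of $A$ are a priori arbitrary sets, not Cohen reals. The paper inserts an extra step here (its Lemma 2.2 together with the fact that every nontrivial countable forcing is Cohen) to convert each $x\in A$ into a Cohen real first.

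The real problem is part (3). Your lower-bound construction is impossible: there is no family $\{A_\alpha:\alpha<\omega_1\}$ of subsets of $\omega$ with $A_\alpha\setminus\bigcup_{\beta<\alpha}A_\beta$ even nonempty for every $\alpha$, since choosing one point from each such difference would inject $\omega_1$ into $\omega$. More to the point, the very argument you give in (2) shows that \emph{no} strictly increasing chain of length $\omega_1$ of intermediate models exists: the successive quotients would yield an $\Add(\omega,\omega_1)$-generic inside $V[G]$. So the supremum $\aleph_1$ is not attained. The paper therefore argues differently: for each $\alpha<\omega_1$ it uses $\Add(\omega,1)\simeq\Add(\omega,\alpha)$ to produce a chain of length $\alpha$, giving $[V[G]:V]^U\ge\alpha$ for every countable $\alpha$ and hence $\ge\aleph_1$; and for the upper bound it runs exactly the ccc-propagation contradiction (no CH needed) to rule out a chain of length $\omega_1$. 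Your upper bound, by contrast, only yields $[V[G]:V]^U\le (2^{\aleph_0})^+$ in general, which is just Corollary 2.3 and is weaker than $\aleph_1$ unless CH holds. Part (4) then inherits the same issues via your reduction.
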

In the proof of the above theorem, we will use the following.
\begin{lemma}
$(1)$ Forcing with $\Add(\omega, 1)$ over $V$, can not add a generic sequence for $\Add(\omega, \omega_1)$ over $V$.

$(2)$ A sequence $\langle x_\a: \a<\aleph_1  \rangle$ of reals is $\Add(\omega, \omega_1)$-generic over $V$, iff for any countable set $I\in V, I \subseteq \omega_1,$ the sequence $\langle x_\a: \a\in I \rangle$ is $\Add(\omega, I)$-generic over $V$, where $\Add(\omega, I)$ is the Cohen frcing for adding $I$-many Cohen reals indexed by $I$.

$(3)$ If $\PP$ is a non-trivial countable forcing notion, then $\PP \simeq \Add(\omega, 1).$
\end{lemma}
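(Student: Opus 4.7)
The plan is to handle (2) and (3) first, since (1) rests on them, and then argue (1) by contradiction combining Lemma~2.2 with both earlier parts.

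For (2), the forward direction uses the product decomposition $\Add(\omega,\omega_1) \simeq \Add(\omega,I) \times \Add(\omega,\omega_1 \setminus I)$ for $I \in V$: projecting the generic filter to the first factor yields an $\Add(\omega,I)$-generic, whose associated sequence is exactly $\langle x_\a : \a \in I \rangle$. For the converse, given a dense $D \in V$ of $\Add(\omega,\omega_1)$, choose in $V$ a maximal antichain $A \subseteq D$; by c.c.c., $A$ is countable. Since conditions in $\Add(\omega,\omega_1)$ have finite support, the set $I := \bigcup_{p \in A} \{\a : \a \in \dom(p)\}$ is a countable element of $V$, and $A$ remains a maximal antichain of $\Add(\omega,I)$. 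The hypothesised $\Add(\omega,I)$-genericity of $\langle x_\a : \a \in I\rangle$ then forces the filter derived from $\vec{x}$ to meet $A$, hence $D$. Part (3) is the classical back-and-forth result that every countable atomless separative partial order is order-isomorphic to $2^{<\omega} = \Add(\omega,1)$; a general non-trivial countable forcing reduces to this setting by passing to its separative atomless quotient.

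For (1), I argue by contradiction. Suppose $\vec{x} = \langle x_\a : \a<\omega_1\rangle \in V[G]$ is $\Add(\omega,\omega_1)$-generic over $V$, where $G$ is $\Add(\omega,1)$-generic over $V$. First I apply Lemma~2.2 to $V \subseteq V[\vec{x}] \subseteq V[G]$: since $|\Add(\omega,1)| = \aleph_0$, this produces a countable forcing $\mathbb{Q}\in V$ and a $\mathbb{Q}$-generic $H$ over $V$ with $V[\vec{x}] = V[H]$. Since $\vec{x} \notin V$, $\mathbb{Q}$ is non-trivial, so (3) gives $\mathbb{Q} \simeq \Add(\omega,1)$; hence $V[\vec{x}] = V[c']$ for a single Cohen real $c'$ over $V$. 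The central observation is that $c' \in V[\vec{x}] = V^{\Add(\omega,\omega_1)}$ has a nice $\Add(\omega,\omega_1)$-name $\dot{c}'$ in $V$ of countable support: the $\omega$-many antichains defining $\dot{c}'$ are countable by c.c.c., each of their conditions has finite domain in $\omega_1$, and the union is a countable $S \subseteq \omega_1$ in $V$ for which $c' \in V[\vec{x}\restriction S]$. Combined with $V[\vec{x}\restriction S] \subseteq V[\vec{x}] = V[c']$, this gives $V[\vec{x}\restriction S] = V[\vec{x}]$. But applying the product decomposition to the partition $\omega_1 = S \cup (\omega_1 \setminus S)$, the sequence $\vec{x}\restriction(\omega_1 \setminus S)$ is $\Add(\omega,\omega_1 \setminus S)$-generic over $V[\vec{x}\restriction S]$, so in particular $\vec{x} \notin V[\vec{x}\restriction S]$, a contradiction.

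The main obstacle is identifying the correct reduction in (1). The two key inputs are Lemma~2.2 and (3), which together show that any intermediate model of a Cohen extension is itself a Cohen extension; the rest is the countable-support property of nice $\Add(\omega,\omega_1)$-names for reals, which forces the alleged $\omega_1$-generic to be captured by a countable piece of itself, conflicting with mutual genericity.
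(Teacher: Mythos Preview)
Your arguments for (2) and (3) are correct and match exactly what the paper indicates: it says (2) ``follows easily using the fact that $\Add(\omega,\omega_1)$ satisfies the countable chain condition'' and that (3) is ``well-known,'' which is precisely the c.c.c./maximal-antichain reduction and the back-and-forth result you outline.

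For (1) your proof is correct but takes a genuinely different route from the paper. The paper gives no argument at all for (1); it simply cites Gitik--Golshani for a generalized version. Your approach is self-contained: you use Lemma~2.2 and part~(3) to show that any intermediate model $V\subseteq V[\vec{x}]\subseteq V[G]$ of a single-Cohen extension is itself a single-Cohen extension $V[c']$, then invoke the countable-support property of nice $\Add(\omega,\omega_1)$-names for reals to trap $c'$ inside $V[\vec{x}\restriction S]$ for a countable $S\in V$, contradicting mutual genericity of the two pieces of $\vec{x}$. This buys you an elementary, internal proof that avoids the external reference; the paper's route presumably yields a more general statement (about adding $\Add(\omega,\lambda)$-generics for various $\lambda$) at the cost of relying on outside machinery. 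Either way, the logical content is the same observation that a c.c.c.\ extension by a countable poset cannot compute an $\omega_1$-length mutually generic sequence, and your derivation of this is sound.
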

A generalized version of $(1)$ is proved in \cite{gitik-golshani},  $(2)$ follows easily using the fact that the forcing notion $\Add(\omega, \omega_1)$ satisfies the countable chain condition, and $(3)$ is well-known.
\begin{proof}
$(1):$ In $V$, fix a canonical enumeration $F: 2^{<\omega}\rightarrow \omega$ such that if $|s|<|t|,$ then $F(s)<F(t).$ For any $t\in (2^\omega)^V$, define $g_t$ by $g_t(n)=g(F(t\upharpoonright n)).$ Then $\langle g_t: t\in (2^\omega)^V) \rangle$ witnesses $\omega-g.tr.deg_V(V[G])=2^{\aleph_0}.$

$(2):$ Let $\k>\aleph_0.$ As $\mathbb{P} \simeq \Add(\omega, \omega),$ it is clear that  $\k-g.tr.deg_V(V[G])\geq \aleph_0.$
On the other hand, if  $\k-g.tr.deg_V(V[G])> \aleph_0,$ then let $\langle x_\a: \a<\aleph_1 \rangle\in V[G]$ be such that for any countable set $I\in V, I \subseteq \omega_1,$ the sequence $\langle x_\a: \a\in I\rangle$ is a set of mutually generics over $V$. By Lemmas 2.2 and 2.6$(3)$, each $x_\a$ can be viewed as a Cohen real, so it follows from    Lemma 2.6$(2)$ that the sequence $\langle x_\a: \a<\aleph_1  \rangle$ is $\Add(\omega, \omega_1)$-generic over $V$, which contradicts Lemma 2.6$(1)$.

$(3):$ Given any $\a<\aleph_1,$ we have $\mathbb{P} \simeq \Add(\omega, \a),$ so let $\langle x_\b: \b< \a \rangle\in V[G]$ be $\Add(\omega, \a)$-generic over $V$, and define $V_\b=V[\langle x_i: i<\b \rangle],$ for $\b\leq \a$ Then $V=V_0 \subset V_1 \subset \dots \subset V_\a$ is an increasing chain of length $\a.$ So
$[V[G]:V]^U \geq \a.$

Now assume on the contrary that, we have an increasing chain $V=V_0 \subset V_1 \subset \dots \subset V_\a \dots \subseteq V[G], \a<\aleph_1,$ of generic extensions of $V$. For each $\a<\aleph_1,$ set $V_\a=V[G_\a],$ where $G_\a\in V[G]$ is generic over some forcing notion in $V$. Again using Lemmas 2.2 an d2.6$(3)$, we can assume that each $G_{\a+1}$ is some Cohen real $x_{\a+1}$ over $V_\a.$ Thus we have a sequence $\langle x_\a: \a<\aleph_1  \rangle$ of reals, each $x_\a$ is Cohen generic over $V[\langle x_\b: \b<\a \rangle].$ By Lemma 2.6$(2)$, the sequence $\langle x_\a: \a<\aleph_1  \rangle$ is $\Add(\omega, \omega_1)$-generic over $V$, which contradicts Lemma 2.6$(1)$.

$(4)$ can be proved similarly.
\end{proof}

Using forcing notions producing minimal generic extensions, we can prove the following.
\begin{theorem}
For any $0<n<\omega,$ there is a generic extension $V[G]$ of the $V$ in which $[V[G]:V]^U=[V[G]:V]_D=n.$
\end{theorem}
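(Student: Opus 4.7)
The plan is to construct $V[G]$ as the extension by an iteration of $n-1$ minimal forcings with a coding feature, so that the intermediate models form a chain of length exactly $n$.

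First, I would recall that a forcing $\mathbb{Q}$ is \emph{minimal} when the only intermediate $ZFC$ models between $V$ and $V[h]$ (for $h$ $\mathbb{Q}$-generic) are $V$ and $V[h]$, and take Sacks forcing $\mathbb{S}$ as the canonical example. I would then define $\mathbb{P}_n$ as the $(n-1)$-step iteration $\mathbb{S} \ast \dot{\mathbb{S}} \ast \cdots \ast \dot{\mathbb{S}}$ equipped with a coding: the generic $g_{k+1}$ at stage $k+1$ must encode the earlier generics $g_1, \ldots, g_k$, for instance by committing its values on a recursive coinfinite subset of $\omega$. Letting $G = g_1 \ast \cdots \ast g_{n-1}$ be $\mathbb{P}_n$-generic and $V_k = V[g_1, \ldots, g_k]$, the strict chain $V = V_0 \subset V_1 \subset \cdots \subset V_{n-1} = V[G]$ of length $n$ consists of models each of which is simultaneously a generic extension of $V$ and a ground of $V[G]$ (via the natural quotient forcing in $V_k$), so this witnesses $[V[G]:V]^U \geq n$ and $[V[G]:V]_D \geq n$.

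For the matching upper bound I would show, by induction on $n$, that every intermediate model $M$ equals some $V_k$. Given $M$ with $V \subseteq M \subseteq V[G]$, let $k$ be the largest index with $g_k \in M$ (setting $k = 0$ if none). The coding forces $g_1, \ldots, g_k \in M$, hence $V_k \subseteq M$; if $k = n-1$ then $M = V[G]$. Otherwise, minimality of the step from $V_k$ to $V_{k+1}$ together with $g_{k+1} \notin M$ yields $M \cap V_{k+1} = V_k$; iterating the combination of minimality and coding at each subsequent stage will give $M \subseteq V_k$, hence $M = V_k$. By Lemma 2.2, this classification also handles the grounds of $V[G]$ containing $V$, so the same bound controls $[V[G]:V]_D$.

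The hard part will be rigorously ruling out ``exotic'' intermediate models that lie outside every $V_k$ while avoiding every $g_{k+1}$. Equivalently, invoking Lemma 2.2, I must show that the Boolean completion $r.o.(\mathbb{P}_n)$ admits only the chain of complete subalgebras $\{0, 1\} \subset r.o.(\mathbb{P}_1) \subset \cdots \subset r.o.(\mathbb{P}_{n-1})$, and no others. This is the main obstacle: it demands a careful design of the coding at each iteration step together with a Boolean-algebraic argument that no ``sideways'' complete subalgebra can be assembled from partial information drawn from different stages of the iteration.
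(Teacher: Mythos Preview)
The paper supplies no proof beyond the sentence preceding the statement: ``Using forcing notions producing minimal generic extensions, we can prove the following.'' Your proposal is exactly an elaboration of that hint, so the approaches coincide.

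Two remarks on your elaboration. First, the coding device you introduce, while workable, is not strictly needed: it is a known (and nontrivial) fact that for a finite iteration of Sacks forcing the intermediate $ZFC$ models are exactly the $V_k = V[g_1,\dots,g_k]$, already linearly ordered by inclusion. The fusion machinery for iterated Sacks forcing rules out precisely the ``sideways'' complete subalgebras you worry about; the techniques behind Groszek's result (the paper's reference for Theorem~2.9) are one place this is developed. So you may either carry out your coded iteration or simply invoke that structural fact about plain Sacks iteration. Second, you are right that this classification of intermediate models is the only substantive step: once the intermediate models are known to form a chain of length exactly $n$, both $[V[G]:V]^U=n$ and $[V[G]:V]_D=n$ follow at once, since by Lemma~2.2 every intermediate model is a set-generic extension of $V$, and by the standard quotient-forcing argument each $V_k$ is also a ground of $V[G]$.
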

V. Kanovei noticed the following:
\begin{theorem}
There is a generic extension $L[G]$ of  $L$ in which $[L[G]:L]^U=\omega+1.$
\end{theorem}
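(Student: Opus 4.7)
The plan is to iterate a minimal-style set forcing $\omega$ times over $L$, using Jensen/Kanovei-style coding at each stage to enforce rigidity of the ground-model filtration. Working in $L$, fix a minimal forcing $\mathbb{Q}$ of the type used in Theorem 2.7 (no proper intermediate ZFC extension), and form a countable-support iteration $\mathbb{P}_\omega = \langle \mathbb{P}_n, \dot{\mathbb{Q}}_n : n < \omega \rangle$ in which each $\dot{\mathbb{Q}}_n$ is a name for such a $\mathbb{Q}$, additionally tuned so that the stage-$n$ generic $G_n$ definably codes the initial-segment generic $G \upharpoonright \mathbb{P}_n$. Let $G$ be $\mathbb{P}_\omega$-generic over $L$, and set $V_n = L[G \upharpoonright \mathbb{P}_n]$ for $n \leq \omega$, so that $V_\omega = L[G]$.

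The lower bound $[L[G]:L]^U \geq \omega + 1$ is witnessed by the chain $L = V_0 \subsetneq V_1 \subsetneq \cdots \subsetneq V_\omega$: each step is strict because $\mathbb{Q}$ is nontrivial, and $V_\omega$ strictly contains each $V_n$ since $V_\omega$ contains the full generic sequence $\vec{G} = \langle G_k : k < \omega \rangle$, while no $V_n$ does. In particular this is a $\subset$-increasing chain of set generic extensions of $L$, of length $\omega + 1$, contained in $L[G]$.

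For the upper bound, I would establish two facts. First, any ZFC model $W$ with $V_n \subseteq W \subseteq L[G]$ for all $n < \omega$ equals $L[G]$: by the coding, each $G_n \in W$ is definably tagged with its index $n$, so $W$ can form the sequence $\vec{G}$ via replacement in $W$, giving $W \supseteq L[G]$. Second, by the rigidity of $\mathbb{P}_\omega$ together with Lemma 2.2 (any ground of $L[G]$ arises from a complete subalgebra of $r.o(\mathbb{P}_\omega)$) and the finite-stage analysis of Theorem 2.7, the set-generic grounds of $L[G]$ over $L$ are exactly $\{V_n : n \leq \omega\}$, with $[V_n:L]^U = n+1$ for each $n < \omega$. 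Now suppose towards contradiction there were a chain $W_0 \subsetneq W_1 \subsetneq \cdots \subsetneq W_{\omega + 1} \subseteq L[G]$ of length $\omega + 2$. Without loss of generality $W_{\omega + 1} = L[G]$, so $W_\omega \subsetneq L[G]$ is a proper ground and hence equals $V_n$ for some $n < \omega$; but then $W_0 \subsetneq \cdots \subsetneq W_\omega$ is a chain of length $\omega + 1$ inside $V_n$, contradicting $[V_n:L]^U = n+1$.

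The main obstacle is enforcing the rigidity. For a generic iteration of minimal forcings, complete subalgebras of $r.o(\mathbb{P}_\omega)$ beyond the initial-segment ones could give rise to intermediate grounds that break the argument, or the missing identifiability of the $G_n$'s could prevent $W$ from reconstructing $\vec{G}$ by replacement. Arranging each $\dot{\mathbb{Q}}_n$ by Jensen/Kanovei-style coding, so that $G_n$ uniquely and definably reconstructs the entire generic history up to stage $n$, is what kills such spurious grounds and makes the replacement argument go through; this is the technical heart of the construction and presumably underlies Kanovei's observation.
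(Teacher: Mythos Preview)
The paper does not actually prove this theorem: it is stated as an observation due to Kanovei, with no argument given. So there is no ``paper's proof'' to compare your proposal against, and I can only comment on your outline on its own terms.

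Your overall architecture is sound. The lower bound is immediate from the chain $L=V_0\subsetneq V_1\subsetneq\cdots\subsetneq V_\omega=L[G]$. For the upper bound, note that once your ``second fact'' holds --- that the intermediate ZFC models between $L$ and $L[G]$ are exactly the $V_n$ for $n\le\omega$ --- everything else is automatic: any $\subset$-chain of set-generic extensions of $L$ inside $L[G]$ is then a subchain of a linear order of type $\omega+1$, so its length is at most $\omega+1$. In particular your ``first fact'' is a consequence of the second, and the WLOG step ``$W_{\omega+1}=L[G]$'' is unnecessary (and not obviously justified as stated, since the definition of $[W:V]^U$ does not require the chain to terminate at $W$).

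The genuine content, as you say, is the rigidity: proving that the $V_n$ are the \emph{only} intermediate models. Your appeal to ``the finite-stage analysis of Theorem~2.7'' does not help, since that theorem is also stated without proof in the paper. A Jensen/Kanovei-style coded iteration of minimal forcings can be made to work, but it is worth noting that a more direct route is available: a countable-support iteration of Sacks forcing of length $\omega$ over $L$ already has the property that its constructibility degrees form exactly $\omega+1$ (this is in the circle of ideas around Groszek's paper cited for Theorem~2.9). Either way, the rigidity claim is the whole theorem, and your outline correctly isolates it without supplying it.
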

It follows from the results in \cite{groszek} that:
\begin{theorem}
Given any $\a \leq \omega_1,$ there exists a cofinality preserving generic extension $L[G]$ of  $L$ in which $[L[G]:L]_D=\a+1.$
\end{theorem}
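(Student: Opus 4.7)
The plan is to invoke the constructions of Groszek from \cite{groszek}, which produce, for each ordinal $\a\le\omega_1$, a cofinality-preserving forcing notion $\mathbb{P}\in L$ whose $L$-generic extension $L[G]$ comes equipped with a descending chain $\langle W_\b : \b\le\a\rangle$ of intermediate $ZFC$-models satisfying $W_0=L[G]$ and $W_\a=L$, and such that these $W_\b$'s exhaust the intermediate $ZFC$-models between $L$ and $L[G]$.

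First I would apply that theorem with parameter $\a$ to obtain $\mathbb{P}$ and the chain $\langle W_\b : \b\le\a\rangle$. Next I would check that each $W_\b$ is actually a \emph{ground} of $L[G]$, not merely an intermediate $ZFC$-model: Groszek's forcing is built with an explicit factorization, so that for every $\b\le\a$ there exist a complete subforcing $\mathbb{Q}_\b$ of $\mathbb{P}$ in $L$ and a quotient name $\dot{\mathbb{R}}_\b$ with $\mathbb{P}\simeq\mathbb{Q}_\b*\dot{\mathbb{R}}_\b$ and $W_\b=L[G\cap\mathbb{Q}_\b]$. This realizes $W_\b$ as a ground of $L[G]$; read from top to bottom, the chain then witnesses the lower bound $[L[G]:L]_D\ge\a+1$.

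For the matching upper bound I would invoke the classification half of Groszek's theorem: every intermediate $ZFC$-model between $L$ and $L[G]$ is one of the $W_\b$. Since a ground of $L[G]$ containing $L$ is in particular an intermediate $ZFC$-model, any descending chain of grounds is a subchain of $\langle W_\b : \b\le\a\rangle$, hence of length at most $\a+1$. Combining the two bounds gives $[L[G]:L]_D=\a+1$.

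The main obstacle is verifying that each $W_\b$ is a ground, particularly at limit $\b$ and especially at $\b=\a=\omega_1$. A descending intersection of grounds need not in general be a ground; what is really being used is that Groszek's forcing admits the required complete subforcings $\mathbb{Q}_\b$ \emph{uniformly} at all levels $\b\le\a$, limits included, while simultaneously remaining cofinality-preserving. Establishing this uniformity is where the specific combinatorics of Groszek's construction must be used, and it cannot be replaced by any soft factorization argument coming from the abstract theory of grounds.
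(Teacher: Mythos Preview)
Your proposal is correct and matches the paper's approach, which is simply to cite Groszek \cite{groszek} without further argument.

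One remark: the ``main obstacle'' you flag is not actually an obstacle. Once you know $L[G]$ is a \emph{set}-generic extension of $L$ and that each $W_\b$ is an intermediate $ZFC$-model, it follows automatically that $W_\b$ is a ground of $L[G]$. Indeed, writing $\mathbb{B}=r.o.(\mathbb{P})$, the intermediate model theorem gives a complete subalgebra $\mathbb{C}_\b\subseteq\mathbb{B}$ with $W_\b=L[\bar G\cap\mathbb{C}_\b]$, and then $L[G]$ is generic over $W_\b$ for the quotient forcing $\mathbb{B}/(\bar G\cap\mathbb{C}_\b)$. No appeal to the specific combinatorics of Groszek's construction is needed for this step, and limit $\b$ (including $\b=\a=\omega_1$) cause no extra difficulty. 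The genuine content imported from \cite{groszek} is exactly what you identified elsewhere: the existence of such a cofinality-preserving $\mathbb{P}$ together with the classification of \emph{all} intermediate $ZFC$-models as the $W_\b$'s, which simultaneously gives both the lower and upper bounds on $[L[G]:L]_D$.
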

\begin{question}
Given cardinals $\k_0 \geq \k_1 \geq ... \geq \k_n,$ is there a forcing extension $V[G]$ of $V$, in which $\aleph_i-g.tr.deg_V(V[G])=\k_i, i=0, ..., n$ and $\l-g.tr.deg_V(V[G])=\k_n,$ for all $\l\geq \aleph_n?$
\end{question}
\begin{remark}
Force with $\PP=\prod_{i=0}^{n} \Add(\aleph_i, \k_i),$ and let $G$ be $\PP$-generic over $V$.  Then for all   $0\leq i \leq n, \aleph_i-g.tr.deg_V(V[G])\geq  \k_i.$
\end{remark}
\begin{question}
Let $\a_0, \a_1\leq \l,$ where $\a_0, \a_1$ are ordinals and $\l$ is a cardinal. Is there a  generic extension $V[G]$ of  $V$ (possibly by a class forcing notion) in which $[V[G]:V]^U=\a_0, [V[G]: V]_D=\a_1$ and the number of intermediate submodels of $V$ and $V[G]$ is $\l?$
\end{question}
Using results of \cite{friedman} and the fact that $0^\sharp$ can not be produced by set forcing,  we have:
\begin{theorem}
For all $\k, \k-g.tr.deg_L(L[0^\sharp])=\infty.$ Also we have $[L[0^\sharp]: L]^U=\infty$ and  $[L[0^\sharp]: L]_D=0.$
\end{theorem}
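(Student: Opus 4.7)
The strategy combines two ingredients: (i) the results of \cite{friedman}, which furnish, inside $L[0^\sharp]$, set-generic filters over $L$ for arbitrarily long Cohen forcings; and (ii) the classical fact that $0^\sharp$ cannot be added by any set forcing. Item (i) drives the two ``upward'' conclusions, and item (ii) drives the ``downward'' one.

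First I would establish $\k$-$g.tr.deg_L(L[0^\sharp]) = \infty$ and $[L[0^\sharp]:L]^U = \infty$ simultaneously. Fix an arbitrary ordinal $\a$. By \cite{friedman} there is a filter $G_\a \in L[0^\sharp]$ which is $\Add(\om,\a)^L$-generic over $L$; let $\langle x_\b : \b < \a \rangle$ be the associated sequence of Cohen reals. Because $\Add(\om,\a)^L$ is a finite-support product, for any partition $\a = I_0 \cup I_1$ lying in $L$ the sequences $\langle x_\b : \b \in I_0\rangle$ and $\langle x_\b : \b \in I_1\rangle$ are mutually generic over $L$. Consequently, the set $\{x_\b : \b < \a\}$ witnesses $\k$-$g.tr.deg_L(L[0^\sharp]) \geq \a$ for every $\k$; and, setting $V_\b = L[\langle x_\g : \g < \b\rangle]$, the chain $\langle V_\b : \b \leq \a\rangle$ is a strictly increasing chain of set-generic extensions of $L$ contained in $L[0^\sharp]$, which witnesses $[L[0^\sharp]:L]^U \geq \a$. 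Since $\a$ was arbitrary, both equal $\infty$.

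Next I would handle the downward generic dimension. Suppose toward a contradiction that $W$ is a ground of $L[0^\sharp]$ with $L \subseteq W \subsetneq L[0^\sharp]$; then $L[0^\sharp] = W[G]$ for some $G$ which is $\MQB$-generic over $W$ with $\MQB \in W$. Since $0^\sharp \in L[0^\sharp] = W[G]$, the fact that $0^\sharp$ cannot be produced by set forcing forces $0^\sharp \in W$; combined with $L \subseteq W$, this yields $L[0^\sharp] \subseteq W$, contradicting $W \subsetneq L[0^\sharp]$. Hence the only ground of $L[0^\sharp]$ containing $L$ is $L[0^\sharp]$ itself, giving $[L[0^\sharp]:L]_D = 0$.

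The principal obstacle is the very first step, namely producing the filter $G_\a \in L[0^\sharp]$ for arbitrarily large $\a$. This is exactly what the techniques of \cite{friedman} accomplish, using Silver indiscernibles for $L$ to manufacture $L$-generic objects of various kinds inside $L[0^\sharp]$. Once these filters are in hand, the remaining arguments are routine structural consequences of the definitions together with the factorization properties of finite-support Cohen products.
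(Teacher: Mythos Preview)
Your proposal is correct and matches the paper's approach exactly: the paper gives no detailed proof but simply attributes the result to the results of \cite{friedman} (for the two $\infty$ claims) together with the fact that $0^\sharp$ cannot be produced by set forcing (for the downward dimension), and your argument is a faithful unpacking of precisely these two ingredients. Your identification of the main obstacle---producing the requisite $L$-generic filters inside $L[0^\sharp]$---and its resolution via \cite{friedman} is also exactly what the paper intends.
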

\section{Some non-absoluteness results}
In this section we present some results which say that the notions of generic dimension and generic transcendence degree are not absolute between different models of $ZFC$.
\begin{theorem}
There exists a generic extension $V$ of $L$, such that if $R$ is $\Add(\omega, 1)$-generic over $V$, then:

$(1)$ $\aleph_1-g.tr.deg_V(V[R])=\aleph_0,$

$(2)$ $\aleph_1-g.tr.deg_L(V[R])\geq \aleph_1.$
\end{theorem}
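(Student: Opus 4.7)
The plan is to set $V = L[G]$, where $G$ is $\Add(\omega, \omega_1)$-generic over $L$, decoded into the generic sequence $\langle x_\a : \a < \omega_1\rangle$ of Cohen reals, and then to adjoin a single Cohen real $R$ over $V$. Since $\Add(\omega,\omega_1)$ is ccc over $L$ and $\Add(\omega,1)$ is countable, cardinals are preserved, so $\omega_1^L = \omega_1^V = \omega_1^{V[R]}$ and the natural indexing set already sits in $L$.

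For $(1)$, observe that $V[R]$ is a single Cohen real generic extension of $V$. The lower bound $\aleph_1-g.tr.deg_V(V[R])\geq \aleph_0$ follows immediately by decomposing $R$ into a countable family of mutually $V$-generic Cohen reals via $\Add(\omega,1)\simeq\Add(\omega,\omega)$. For the upper bound, the plan is to repeat the argument of Theorem 2.5$(2)$ verbatim with $V$ playing the role of the ground model: a hypothetical witness $\langle y_\a : \a < \omega_1\rangle\in V[R]$ would, by Lemmas 2.2 and 2.6$(3)$, consist of Cohen reals over $V$, and then by Lemma 2.6$(2)$ the whole sequence would be $\Add(\omega,\omega_1)$-generic over $V$, contradicting Lemma 2.6$(1)$.

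For $(2)$, the candidate witness is the sequence $\langle x_\a : \a < \omega_1\rangle$ itself, which lives in $V \subseteq V[R]$ and is indexed by $\omega_1 \in L$. For any countable $I \subseteq \omega_1$ with $I \in L$, Lemma 2.6$(2)$ gives that $\langle x_\a : \a \in I\rangle$ is $\Add(\omega,I)$-generic over $L$, and so for any partition $I = I_0 \cup I_1$ in $L$, $\langle x_\a : \a \in I_0\rangle$ is $\Add(\omega,I_0)$-generic over $L[\langle x_\a : \a \in I_1\rangle]$. This is exactly mutual $L$-genericity for the subfamily, yielding $\aleph_1-g.tr.deg_L(V[R]) \geq \aleph_1$.

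The main subtlety is the bookkeeping of base models in the definition of mutual genericity: $(1)$ restricts to countable index sets in $V$, while $(2)$ restricts to countable index sets in $L$, and both conditions then live side-by-side inside the single model $V[R]$. The only nontrivial input is that Lemma 2.6$(1)$ must be available with $V=L[G]$ (not merely with $L$) as the ground model; this is exactly what the generalized version cited from \cite{gitik-golshani} provides, and without it the upper bound in $(1)$ could not be extracted from the same template.
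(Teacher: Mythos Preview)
Your proof is correct, but it takes a genuinely different and more elementary route than the paper's.

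The paper does \emph{not} take $V = L[\Add(\omega,\omega_1)]$. Instead it invokes the main result of \cite{gitik-golshani1}: there is a cofinality-preserving extension $V$ of $L$ with the property that adding a single Cohen real $R$ over $V$ already produces, inside $V[R]$, a sequence that is $\Add(\omega,\omega_1)$-generic over $L$. The witness for $(2)$ in the paper is therefore manufactured by $R$ itself, not pre-loaded into $V$. Your approach avoids this machinery entirely: by putting the $\omega_1$ mutually $L$-generic Cohen reals directly into $V$, the witness for $(2)$ is already present before $R$ is adjoined, and $(2)$ becomes an immediate consequence of the product structure of $\Add(\omega,\omega_1)$ together with Lemma~2.6$(2)$. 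Part $(1)$ is handled identically in both arguments via Theorem~2.5$(2)$.

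What each approach buys: your construction is strictly simpler and needs no input beyond the lemmas already in Section~2, so it yields the theorem with less overhead. The paper's choice, on the other hand, exhibits a sharper non-absoluteness phenomenon: the very Cohen real $R$ that can only generate $\aleph_0$ mutually $V$-generic objects simultaneously generates $\aleph_1$ mutually $L$-generic objects, so the discrepancy is witnessed by the forcing itself rather than by content already sitting in the intermediate model. For the theorem as stated, however, your argument suffices.
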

\begin{proof}
By \cite{gitik-golshani1}, there is a cofinality preserving generic extension $V$ of $L,$ such that adding a Cohen real $R$ over $V$, adds a generic filter for $\Add(\omega, \omega_1)$ over $L$.

Now $(1)$ follows from Theorem 2.5$(2)$, and $(2)$ follows from the fact that there exists a sequence $\langle x_\a: \a<\aleph_1 \rangle$ of reals, which is  $\Add(\omega, \omega_1)$-generic over $L$.
\end{proof}
\begin{theorem}
Assume $0^\sharp$ exists, $\k$ is a regular cardinal in $L$, and let $\PP=Sacks(\k, 1)_L$, the forcing for producing a minimal extension of $L$ by adding a new subset of $\k.$ Let $G$ be $\PP$-generic over $V$. Then:

$(1)$ $[L[G]: L]^U=[L[G]: L]_D=1,$

$(2)$ $[V[G]: V]^U, [V[G]: V]_D \geq \k.$
\end{theorem}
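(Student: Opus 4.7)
My plan is to handle part (1) directly from the minimality property of $\mathrm{Sacks}(\kappa,1)_L$, and then address part (2) by a case analysis based on the $V$-cardinality of $\kappa$. For part (1), since density of subsets of $\PP$ is absolute between transitive models and $L \subseteq V$, any filter $\PP$-generic over $V$ is automatically $\PP$-generic over $L$. Hence $L[G]$ is well defined, and by the design of $\mathrm{Sacks}(\kappa,1)_L$ as a minimal-extension forcing, no $ZFC$ model lies strictly between $L$ and $L[G]$, giving $[L[G]:L]^U = [L[G]:L]_D = 1$.

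For part (2), the key point is that $V$, containing $0^\sharp$, has access to the Silver indiscernibles and therefore sees the interior of $L$ very differently than $L$ itself. I would distinguish two cases depending on whether $\kappa$ lies below the first Silver indiscernible $i_0 = \omega_1^{L[0^\sharp]}$. In Case A, $\kappa < i_0$, so $\kappa$ is countable in $V$; then $|\PP|^L \leq (2^\kappa)^L$ is still below $i_0$ (since $i_0$ is inaccessible in $L$), so $\PP$ is a countable, non-atomic forcing notion in $V$. By Lemma 2.6(3), $\PP \cong \mathrm{Add}(\omega,1)$ in $V$, and Theorem 2.5(3,4) then gives $[V[G]:V]^U = [V[G]:V]_D = \aleph_1 \geq \kappa$.

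In Case B, $\kappa \geq i_0$, and because $\kappa$ is regular in $L$ yet uncountable in $V$, one shows $\kappa$ must be inaccessible in $L$. The failure of Jensen's covering theorem, equivalent to the existence of $0^\sharp$, yields $\cf^V(\kappa) < \kappa$. My plan is to construct a $\kappa$-chain of intermediate models by factoring $\PP$, as viewed in $V$, along a cofinal sequence of indiscernibles below $\kappa$: for each $\alpha < \kappa$ I would isolate a sub-forcing $\mathbb{Q}_\alpha \in V$ of $\PP$ whose generic $G_\alpha$ is recoverable from $G$ using the $\alpha$-th indiscernible, yielding a strictly increasing chain $V \subsetneq V[G_0] \subsetneq \cdots \subsetneq V[G_\alpha] \subsetneq \cdots \subsetneq V[G]$ of length $\kappa$.

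The main obstacle will be verifying strict increase in Case B. The minimality of $\mathrm{Sacks}(\kappa,1)_L$ over $L$ means that in $L$ the pieces of the Sacks generic are bound together so tightly that no nontrivial intermediate model arises, and naively transferring any decomposition to $V$ does not obviously help. The resolution I envisage is that the Silver indiscernibles in $V$, being external to $L$, act as new parameters that unbundle these pieces and expose sub-generics invisible to $L$. Granted this, the downward inequality $[V[G]:V]_D \geq \kappa$ follows from the same chain, since by Lemma 2.2 each intermediate model $W = V[H]$ is simultaneously a set-generic extension of $V$ and a ground of $V[G]$ via the quotient $\PP/H$.
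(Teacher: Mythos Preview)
Your treatment of part~(1) matches the paper's: minimality of $\mathrm{Sacks}(\kappa,1)_L$ over $L$ gives the result immediately. Your Case~A for part~(2) is also correct and is in fact more explicit than the paper, which does not separate this case out.

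The genuine gap is Case~B. Your proposed construction---factoring $\PP$ along a cofinal sequence of indiscernibles to produce sub-forcings $\mathbb{Q}_\alpha$ with strictly increasing generics---is not carried out, and you yourself flag the strict-increase step as an obstacle. There is no indication of what $\mathbb{Q}_\alpha$ should be, why the resulting models are distinct, or why the indiscernibles ``unbundle'' the Sacks generic in the way you suggest. As written this is a hope, not an argument. Note also that your auxiliary claim ``$\kappa$ must be inaccessible in $L$'' is false: for instance $\kappa=(\omega_1^V)^{+L}$ is regular in $L$, uncountable in $V$, and a successor cardinal in $L$.

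The paper's proof of part~(2) takes a completely different route and does not attempt any direct decomposition of $\PP$. It simply invokes a theorem of Stanley (\emph{Forcing disabled}, J.\ Symbolic Logic 57 (1992), 1153--1175): when $0^\sharp$ exists, forcing with $\mathrm{Sacks}(\kappa,1)_L$ over $V$ collapses $\kappa$ to $\omega$. This is the substantial input you are missing. Once $\kappa$ is countable in $V[G]$, long chains are routine: letting $\mu=|\kappa|^V$, the collapse of $\mu$ forces $\mathrm{Col}(\omega,\mu)$ to embed completely into $\mathrm{RO}(\PP)$ (by the standard absorption property of the L\'evy collapse), and $\mathrm{Col}(\omega,\mu)\cong\prod_{i<\mu}\mathbb{Q}_i$ with finite support yields strictly increasing chains of every length $<\mu^{+}$, in particular of length $\kappa$. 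Your Case~A is recovered as the degenerate instance $\mu=\aleph_0$. So rather than building the chain by hand from indiscernibles, you should cite Stanley's result as a black box; what you are effectively trying to reprove in Case~B is the content of that paper.
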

\begin{proof}
$(1)$ is trivial, as forcing with $\PP$ over $L$ produces a minimal generic extension of $L$.
$(2)$ Follows from a result of Stanley \cite{stanley}, which says that forcing with $\PP$ over $V$ collapses $\k$ into $\omega.$
\end{proof}

School of Mathematics, Institute for Research in Fundamental Sciences (IPM), P.O. Box:
19395-5746, Tehran-Iran.

E-mail address: golshani.m@gmail.com


\begin{thebibliography}{99}
\bibitem{friedman} Friedman, Sy-David; Ondrejovic, Pavel; The internal consistency of Easton's theorem. Ann. Pure Appl. Logic 156 (2008), no. 2--3, 259–-269.

\bibitem{gitik-golshani1} Gitik, Moti; Golshani, Mohammad; Adding a lot of Cohen reals by adding a few. I. Trans. Amer. Math. Soc. 367 (2015), no. 1, 209–229.

\bibitem{gitik-golshani} Gitik, Moti; Golshani, Mohammad; Adding a lot of Cohen reals by adding a few. II. submitted.

\bibitem{groszek} Groszek, Marcia; $\omega_1^*$ as an initial segment of the c-degrees. J. Symbolic Logic 59 (1994), no. 3, 956-–976.

\bibitem{stanley} Stanley, M. C.; Forcing disabled. J. Symbolic Logic 57 (1992), no. 4, 1153-–1175.
\end{thebibliography}
\end{document}